\newcommand{\rev}[1]{#1}%
\title{\bfseries Long-term regulation of prolonged epidemic outbreaks in large populations via adaptive control: a singular perturbation approach}
\author{M. Ali Al-Radhawi, Mahdiar Sadeghi, and Eduardo D. Sontag 
\thanks{The authors are with the Department of Electrical \& Computer Engineering, Northeastern University, Boston, MA. E.D. Sontag, is also with the Department of Bioengineering, Northeastern University.
Emails: \texttt{\{malirdwi,sadeghi.ma,e.sontag\}@northeastern.edu}.}}
\date{\today}
\newcommand\image{\mbox{image}}
\newtheorem{theorem}{Theorem}%
\newtheorem{proposition}{Proposition}%
\begin{document}
\abovedisplayskip=3pt
 
\belowdisplayskip=3pt
\maketitle
\thispagestyle{empty}
\begin{abstract}
\rev{In order to control highly-contagious and prolonged outbreaks,} public health authorities \rev{intervene} to institute social distancing, lock-down policies, and other Non-Pharmaceutical Interventions (NPIs). Given the high social, educational, psychological, and economic costs of NPIs, authorities tune them, alternatively tightening up or relaxing rules, with the result that, in effect, a relatively flat infection rate results.  
For example, during the summer in parts of the United States, daily \rev{COVID-19} infection numbers dropped to a plateau.
This paper approaches NPI tuning as a control-theoretic problem, starting from a simple dynamic model for social distancing based on the classical SIR epidemics model.
Using a singular-perturbation approach,
the plateau becomes a 
Quasi-Steady-State (QSS) of a reduced two-dimensional SIR model regulated by adaptive dynamic feedback.
It is shown that the QSS can be assigned and it is globally asymptotically stable. Interestingly, the dynamic model for social distancing can be interpreted as a nonlinear integral controller.  Problems of data fitting and parameter identifiability are also studied for this model.  The paper also discusses how this simple model allows for a meaningful study of the effect of population size, %
 vaccinations, and the emergence of second waves.
\end{abstract}
\begin{keywords}
	Epidemic models, singular perturbations, nonlinear control.
	\end{keywords}
\section{Introduction}
 COVID-19, a highly contagious disease, has been spreading globally and has already claimed more than 2.5 million lives  during its first year. %
 Unsurprisingly, this has sparked a renewed  interest in the dynamical modeling and analysis of infectious diseases, particularly in the control theory and dynamical systems communities 
\cite{weitz20,parisini20,franco20,levine20,giordano20,leonard20,johnston20,sontag20,sadeghi20}.  
The starting point in modern epidemiological modeling   is the Kermack-McKendrick model  \cite{kermack27,hethcote00} which is known as the Susceptible-Infectious-Removed (SIR) model. It assumes a well-mixed homogeneous population, and it can be written as  %
the three-compartment model:
\begin{align}\nonumber
\dot S(t) &= %
-c\beta S(t) I(t),  \\ \label{sir}
\dot I(t) &=  \;\;\; c\beta  S(t) I(t)  - \gamma I(t), \\ \nonumber
\dot R(t) &= \qquad\qquad\qquad\;\; \gamma I(t),
\end{align}
where $S(t),I(t),R(t)$ refer to the susceptible, infective, and removed individuals at time $t$. 
The product $b=c\beta$ and the parameter $\gamma$ are called the \emph{infection rate} and the \emph{removal rate}, respectively.  We factored the infection rate as $b=c\beta$, where we call $c$ and $\beta$ the \emph{intrinsic infection rate} and the \emph{contact rate} respectively, to emphasize that $b$ depends on both \emph{biological} and \emph{societal} conditions. %

\rev{The  steady-state behavior of the above model is simple.} Infectives always converge  to zero, and the  susceptibles converge to a residual nonzero value. %
 Nevertheless, the \emph{transient} dynamics are much more interesting. Starting from any nonzero number of infectious individuals, since
$\dot I(0) = (bS(0)-\gamma)I(0)$,
an outbreak occurs iff $R_0:=b S(0)/\gamma > 1$, otherwise, $I(t)$ decreases to zero. In the case of an outbreak, the infectives increase to a peak value, and then gradually converge to zero. 

However, \rev{in prolonged outbreaks,} %
the transient behavior predicted by the basic SIR model is no longer representative of the outbreak after the initial surge. This is since the increased number of infections will necessarily produce a feedback effect via media coverage, public precautionary measures, and government interventions. Mathematically, this means that \emph{the effective contact rate} $\beta$ is no longer constant over time. Modeling and analysis of such feedback effects have been a topic of a large body of research.  For our purposes, there are two categories of models:  \emph{compartmental} models, and   \emph{compound models}. The state vector in the first class of models consists of sub-populations that sum to the total population.  In such context, one of the early strategies for modeling feedback is nonlinear contact rates \cite{capasso78,levin86,korobeinikov05,franco20,weitz20}.  For example, the following form has been studied in \cite{capasso78}:
$ \beta(t)= \frac{\beta_0}{1+k I(t)}, $
where $\beta_0$ is the nominal contact rate.  A
quasi-mechanistic justification of this term was proposed in the language of reaction networks as a two-stage process \cite{franco20}, which produces the above expression %
after a time-scale separation argument. %
Other models of feedback include regulation   by direct control of the contact rate     \cite{levine20,sontag20,sadeghi20}, by the creation of tested and quarantined compartments  \cite{giordano20,sontag20}, or by vaccinations \cite{hansen11}.

 However, such models do not usually account for the fact that the feedback effects originating from  public interventions and fear of infection are  not fast enough to justify   eliminating them from the dynamical modeling. %
  Therefore, there have been a second category of models that we call \emph{compound models}. They  describe the pandemic as an interaction between two dynamical processes, namely the \emph{compartmental component} which corresponds to the dynamics of disease, and the \emph{regulatory component} which corresponds to  dynamics of the meta-effects of the disease. The latter dynamics have been referred to by multiple names such as ``dynamics of fear'' \cite{epstein08,johnston20},  ``information transmission'' \cite{funk09}, ``behavior dynamics'' \cite{wang15}, ``reactive social distancing'' \cite{yu17}, and the population observation of the infection level \cite{leonard20}.   Despite the different names, the latter paradigm can be summarized as an interaction of two types of dynamics (Figure \ref{f.block_d}-a). In control theoretic terms, this can be interpreted as the familiar plant-controller dichotomy.
 
 \begin{figure}
	\centering
 {\includegraphics[width=0.879\columnwidth]{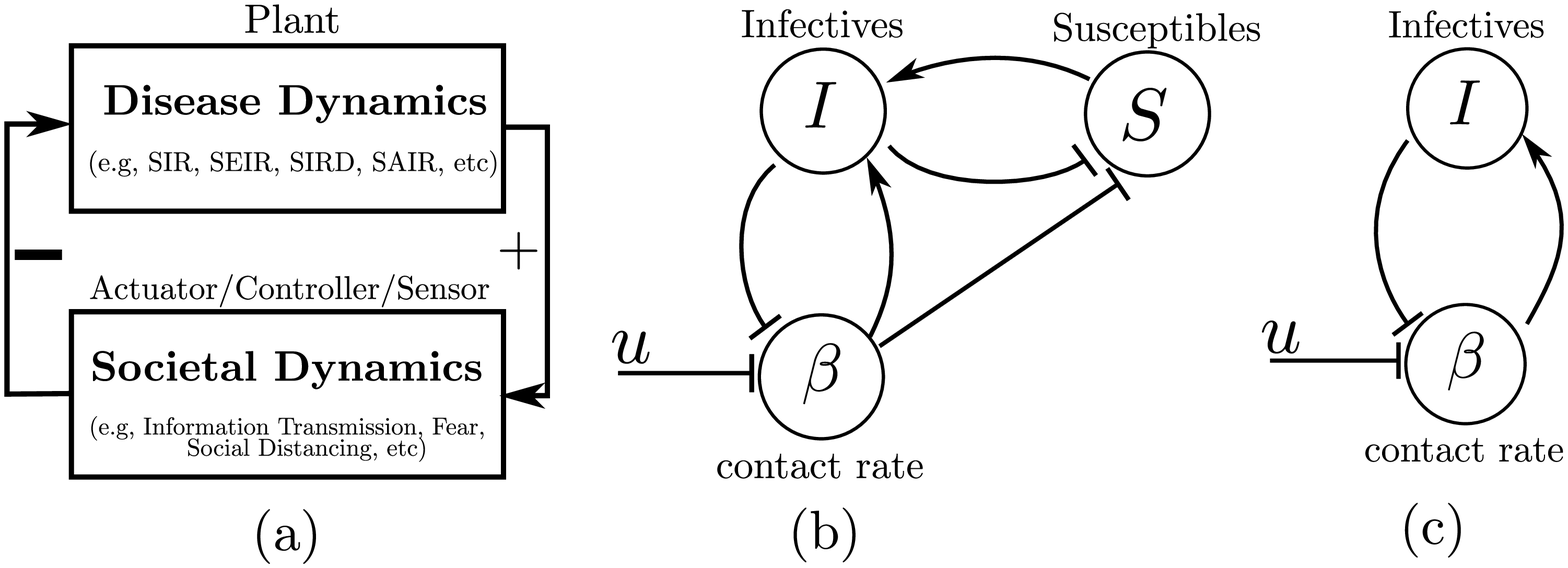}} \\
{\includegraphics[width=0.66\columnwidth]{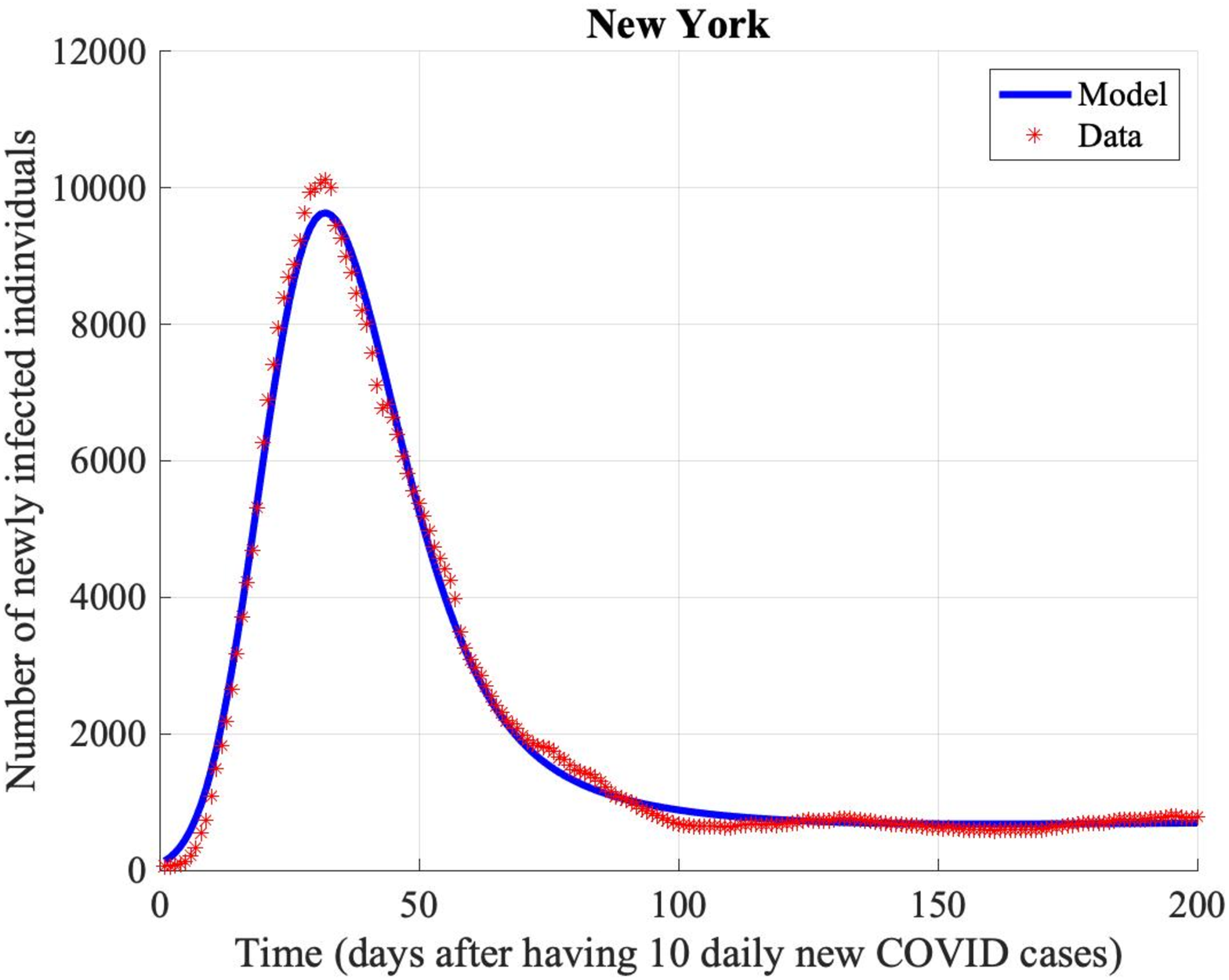}}\\ (d) \vspace{-0.12in}
	\caption{ \textbf{Modeling   \rev{a prolonged outbreak}.} (a) The compound epidemic model   can be cast in a standard control theoretic framework. (b) A minimal regulated SIR model, (c) The reduced regulated SIR model in the case of a large population over short time periods (e.g., less than a year). (d) Fitting the first 200 days of the new daily \rev{COVID-19} infections (moving average of 7 days) in the state of New York using the two-dimensional model \eqref{fit_model}. \rev{The fitted parameters are: $\gamma=0.071$, $\alpha=0.0575$, $K=0.0104$, and $u=0.8\!\times\! 10^{-4}$.}} \label{f.block_d}
\end{figure}

In this work, we  present a  control-theoretic study of a minimal model \rev{of a prolonged outbreak} that captures both of the disease dynamics and the social dynamics using tools from singular perturbation theory and nonlinear control. Unlike other elaborate models in the literature, we opt for an intelligible model (Fig. \ref{f.block_d}-b,c) that is simple enough to studied analytically, yet it is also representative of the dynamics. As a demonstration, \rev{we} consider the daily \rev{COVID-19} infection numbers in the state of New York. Using our two-dimensional model (Eq. \eqref{fit_model}) we can fit the data satisfactorily  as in Fig. \ref{f.block_d}-d. Furthermore, the model allows us to interpret the plateau in the second hundred days as a QSS of the fast component of the model and understand the role of the parameters. 

Organization is as follows. In \S 2, we propose our modeling framework. We study stability and the validity of the time-scale decomposition in \S 3, and the properties of the model including integral error tracking and fold-change detection in \S 4. In \S 5, we study the specific case of Monod-inhibition and parameter identifiability, and \S 6 contains fits and a discussion.

\section{Regulating the SIR via Adaptive Control}
\subsection{Regulation of the contact rate}
Consider the SIR model \eqref{sir}. In order to model the time-varying nature of $\beta$ in a prolonged epidemic, we start from a one-dimensional ODE regulating its dynamics. In the absence of disease, let $\beta_e$ be the nominal value of the contact rate and we assume that it is asymptotically stable. First, we write a simple linear model (which can be interpreted as a linearization of a more general nonlinear model which will \rev{be considered} later) as:
$\dot \beta = - \alpha ( \beta - \beta_e),$
where $\alpha>0$ is the rate at which the contact rate returns to the nominal rate after the absence of an external perturbation. 

After the emergence of an epidemic, we model the feedback effect as follows: the society aims at reducing the nominal contact rate to a new reduced level that varies in a direction opposite to that of %
$I$.  Let the new nominal contact rate be $\beta_e = h(I)$ where $h$ is a $C^1$ \rev{ positive function. It quantifies the new desired contact rate when the number of infectives rises, and hence $h$ is naturally strictly decreasing.} Furthermore, we allow a control ``knob'' $u>0$ to tune   the \emph{society's perception   of the severity of the disease}. Using common parlance, we can say that $u$ is \rev{a parameter to modulate the intensity of  \emph{social distancing}}.   Thus, we arrive to the model:
$ \dot \beta = - \alpha ( \beta - h(uI)),$ where $u>0$ is a constant.  The theory can be expanded effortlessly if we consider  a slightly more general model:
\begin{equation}\label{beta}
	 \dot \beta = - \alpha ( g(\beta) - h(uI)),
\end{equation}
where $g$ is any strictly increasing nonnegative $C^1$ function. \rev{It models the natural tendency of people to socially distance, and hence it is proportional to the current contact rate $\beta$.} For instance, consider a city-wide festival. After it ends and the external stimulus disappears, \rev{   people intrinsically want to reduce their current contact rate  and return to their nominal one}. We will see later that this natural social distancing tendency acts as a damping control that stabilizes the system.  %

\subsection{Model reduction via time-scale separation}
Large cities and metropolitan areas are fertile grounds for the spread \rev{of highly-contagious diseases}. Due to the disease suppression measures, the number of susceptibles remains very large compared to the infectives (e.g., millions vs thousands). Hence, the susceptibles can be considered constant from the point of view of the infectives in an appropriate time scale of interest (e.g., 6 months).  To be more precise, we use singular perturbation techniques \cite{kokotovic99} to write \eqref{sir},\eqref{beta} as a slow-fast decomposition. Let $0<\varepsilon \ll 1$ be a small parameter. We write the re-scaled susceptibles $\tilde S=\varepsilon   S$. For instance, if the initial number of susceptibles is $20 \times 10^6$ (e.g., the metropolitan New York area) and we let $\varepsilon=10^{-6}$, then $\tilde S$ has the unit of Millions. Furthermore, we let $\tilde c = c/\varepsilon$. This re-scaling is meaningful because we have observed in our data fitting that $c$ is in the order of $\varepsilon$. Note that $cS=\tilde c \tilde S$. Therefore, we can write \eqref{sir}, \eqref{beta} as:
\begin{align}\label{slowt}
	\dot{\tilde S}&=  - \varepsilon c \beta SI = -\varepsilon \tilde c \beta \tilde S I, \\ \label{fast1}
	\dot I & = \tilde c \beta \tilde S I - \gamma I, \\ \label{fast2}
	\dot \beta & = - \alpha( g(\beta) - h(uI)). 
\end{align}
The dots indicate derivative with respect to time $t$ in the \emph{original time scale}, which corresponds to the \emph{fast} time-scale in the singular perturbations literature. 
We refer to \eqref{fast1}-\eqref{fast2} as the fast subsystem, and to \eqref{slowt} as the slow system.%

\section{Stability and Singular Perturbation Analysis} 
\subsection{Steady-state analysis of the fast system}
For the fast system \eqref{fast1}-\eqref{fast2}, the variable corresponding to the rescaled susceptibles is seen as a constant. Hence, for simplicity of notation denote $c_s :=   c   S$, and we study the following system in the \emph{original} time scale:
\begin{align}\label{2dmodel}
	\dot I &= (c_s \beta - \gamma ) I, \\ \nonumber
	\dot \beta & = -\alpha ( g(\beta)-h(uI) ),
\end{align}
where $u>0$ is fixed. We recall our assumptions: \\
\textbf{(A1)} $h,g$ are $C^1$ functions. \textbf{(A2)} $h$ is strictly decreasing and positive on $[0,\infty)$. \textbf{(A3)} $g$ is strictly increasing and nonnegative on $[0,\infty)$. 

It follows from the assumptions above that \eqref{2dmodel} is positive, i.e., the orthant is forward invariant.
By A1-3, the system \eqref{2dmodel} can admit (up to) two steady states $(I_d,\beta_d), (I_e,\beta_e)$ (which are QSSs considering the full model): 
\begin{enumerate} \item The \emph{disease-free} steady state which has $I_d=0$%
	, and it exists iff $h(0) \in \image(g)$.  

\item The \emph{endemic steady state} which has $I_e>0$.   %
 It exists iff $g(\gamma/c_s) \in \image (h)^{\circ}$, where $^\circ$ denotes the interior of a set. %
\end{enumerate}

For a given contact rate model \eqref{beta}, the endemic steady state    can fail to exist only if $\gamma$ is sufficiently large, i.e., if the infectives are removed quickly, or if $c_s$ is sufficiently small. So, either the intrinsic infection rate is small (low infectivity), or $\tilde S$ is small (small number of susceptibles).%

We start by this basic result:
\begin{proposition} Consider \eqref{2dmodel} with Assumptions A1-3. If   the endemic   steady state   exists,   then the disease-free steady state is exponentially unstable. %
	\end{proposition}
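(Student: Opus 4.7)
The plan is to linearize the planar system \eqref{2dmodel} at the disease-free steady state $(0,\beta_d)$ (with $g(\beta_d)=h(0)$) and exhibit a strictly positive eigenvalue of the Jacobian; exponential instability will then follow from the standard Hartman--Grobman linearization theorem.

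First, I would compute the Jacobian. Since the $I$-equation reads $(c_s\beta-\gamma)I$, its derivative with respect to $\beta$ equals $c_s I$, which vanishes at $I=0$. The Jacobian at $(0,\beta_d)$ is therefore lower triangular:
\[
J(0,\beta_d) \;=\; \begin{pmatrix} c_s\beta_d-\gamma & 0 \\ \alpha u\,h'(0) & -\alpha g'(\beta_d) \end{pmatrix},
\]
so its eigenvalues are the diagonal entries $\lambda_1=c_s\beta_d-\gamma$ and $\lambda_2=-\alpha g'(\beta_d)$. The second eigenvalue is nonpositive by (A3), but its sign is irrelevant for instability; everything hinges on showing $\lambda_1>0$.

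Second, I would use the endemic-existence hypothesis to pin down the sign of $\lambda_1$. By (A2), $h$ is continuous and strictly decreasing on $[0,\infty)$, so its image is a half-open interval whose supremum is attained only at the endpoint $h(0)$; consequently $\image(h)^{\circ}$ does not contain $h(0)$. The hypothesis $g(\gamma/c_s)\in\image(h)^{\circ}$ therefore yields the strict inequality $g(\gamma/c_s)<h(0)=g(\beta_d)$, and the strict monotonicity of $g$ from (A3) gives $\gamma/c_s<\beta_d$, i.e., $\lambda_1=c_s\beta_d-\gamma>0$.

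With a strictly positive eigenvalue of the linearization, the disease-free equilibrium possesses a one-dimensional unstable manifold in the nonlinear dynamics, which is exactly exponential instability. The main conceptual content is the second step: the existence of an endemic equilibrium is equivalent to the effective basic reproduction number $c_s\beta_d/\gamma$ at the disease-free state exceeding one, and this is what destabilizes the $I$-direction. I do not anticipate a serious technical obstacle; the only subtlety is verifying that $\image(h)^{\circ}$ excludes $h(0)$, which is where the strict monotonicity assumption on $h$ is used in an essential way.
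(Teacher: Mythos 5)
Your proposal is correct and follows essentially the same route as the paper: linearize at the disease-free state, note the Jacobian is triangular, and use $g(\gamma/c_s)\in\image(h)^{\circ}$ together with the monotonicity of $g$ and $h$ to conclude that the $I$-direction eigenvalue $c_s\beta_d-\gamma$ is strictly positive. Your direct observation that $h(0)\notin\image(h)^{\circ}$ is in fact a slightly cleaner way of obtaining the strict inequality than the paper's contradiction argument (and note that instability here follows from the unstable-manifold/Lyapunov indirect method rather than Hartman--Grobman, since $-\alpha g'(\beta_d)$ could vanish).
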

\begin{proof}
	If the disease-free steady state does not exist then the statement is vacuously true.  Hence, assume that $h(0) \in \image(g)$. Note that $\beta_d=g^{-1}(h(0))$.
	Writing the Jacobian of \eqref{2dmodel} at $(I_d,\beta_d)$, we get:
	\[ \begin{bmatrix}  c_s g^{-1}(h(0)) - \gamma & 0 \\  \alpha u \left .\frac{\partial h}{\partial I}\right |_{I=0}  & -\alpha \left . \frac{\partial g}{\partial \beta} \right |_{\beta=\beta_d}
		\end{bmatrix}.\]
	Since the Jacobian is upper triangular,  the eigenvalues coincide with the diagonal entries. Since the endemic steady state exists, then $g(\gamma/c_s) \in \image (h)^\circ$. For the sake of contradiction, assume that $g^{-1}(h(0)) < {\gamma}/c_s$. Since $g$ is increasing then $h(0) < g(\gamma/c_s) $. Since $h$ is decreasing, then $ 0 > h^{-1}(g(\gamma/c_s))$ which is a contradiction since the endemic steady state is positive. Therefore, the Jacobian has a positive eigenvalue and hence the disease-free steady state is exponentially unstable.
	\end{proof}
 
 Since we are interested in the long-term regulation of the epidemic, we assume that the endemic state   exists; i.e., we have:\\
 \textbf{\rev{(A4)}}   $g(\gamma/c_s) \in \image (h)^\circ$.
 
 \subsection{Stability analysis of the fast system}
 We examine now the stability of  the endemic steady state. We perform the following change of coordinate $p:= \ln I$. Hence, we get the following system:
 \begin{align}\label{2dmodel_t}
 	\dot p &= c_s \beta - \gamma, \\ \nonumber
 	\dot\beta &= -\alpha ( g(\beta)- h(ue^p)).
 	\end{align}
 
 In order to analyze the system, we can momentarily assume that $g\equiv 0$. In this case, the system is a Hamiltonian system. When $g$ is added, which represents the intrinsic tendency of people to socially distance, then it can be interpreted as a \emph{nonlinear damping control} that stabilizes the system, and the Hamiltonian can be used as a Lyapunov function \cite{mct_sontag,sontag11}.
 
 This is stated in the following theorem:
 \begin{theorem}\label{th.gas}
 	Consider \eqref{2dmodel} with Assumptions A1-A4. Then, the endemic steady state $(h^{-1}(g(\gamma/c_s))/u,\gamma/c_s)$ is a unique positive steady state which is  globally asymptotically stable and locally exponentially stable.
 	\end{theorem}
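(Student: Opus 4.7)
The plan is to exploit the Hamiltonian-plus-damping structure hinted at in the paper: for $g \equiv 0$ the system \eqref{2dmodel_t} is a conservative Lotka--Volterra-type system in the $(p,\beta)$-plane, and the strictly increasing $g$ supplies nonlinear damping. Write $\beta^* := \gamma/c_s$ and $p^* := \ln(h^{-1}(g(\beta^*))/u)$; existence of this equilibrium in the open positive orthant is exactly A4. I would use the Lyapunov candidate
\[
V(p,\beta) \;=\; \tfrac{c_s}{2}(\beta-\beta^*)^2 \;+\; \alpha \int_{p^*}^{p} \bigl(h(ue^{p^*}) - h(ue^{s})\bigr)\,ds,
\]
which is $C^1$ on $\mathbb{R}^2$. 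By A2 the integrand is strictly positive for $s>p^*$ and strictly negative for $s<p^*$, so both summands are nonnegative and vanish only at $(p^*,\beta^*)$; hence $V$ is positive definite. Radial unboundedness follows because, as $s\to+\infty$, the integrand tends to $h(ue^{p^*})-\lim_{t\to\infty}h(t)>0$, and as $s\to-\infty$ it tends to $h(ue^{p^*})-h(0)<0$, so in both directions it is uniformly bounded away from $0$ outside a compact set, forcing the integral to diverge.

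A direct computation, with the quadratic-in-$\beta$ and integral-in-$p$ pieces tailored so that the ``symplectic'' cross terms cancel, yields
\[
\dot V \;=\; -\alpha c_s (\beta - \beta^*)\bigl(g(\beta) - g(\beta^*)\bigr) \;\le\; 0,
\]
with equality iff $\beta=\beta^*$ by A3. LaSalle's invariance principle then confines $\omega$-limits to the largest invariant subset of $\{\beta=\beta^*\}$; on this set $\dot\beta=0$ forces $h(ue^{p})=g(\beta^*)=h(ue^{p^*})$, and strict monotonicity of $h$ forces $p=p^*$. This also proves uniqueness of the positive equilibrium. Since $p=\ln I$ is a diffeomorphism of $(0,\infty)$ onto $\mathbb{R}$, global asymptotic stability transfers back to the $(I,\beta)$ coordinates on the positive orthant, which is forward invariant as noted earlier.

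For local exponential stability I would linearize \eqref{2dmodel_t} at $(p^*,\beta^*)$; the Jacobian has diagonal entries $0$ and $-\alpha g'(\beta^*)$ and off-diagonals $c_s>0$ and $\alpha u e^{p^*}h'(ue^{p^*})<0$. Hence $\det J = -\alpha c_s u e^{p^*}h'(ue^{p^*}) > 0$ and $\operatorname{tr} J = -\alpha g'(\beta^*) \le 0$; provided $g'(\beta^*)>0$---a mild condition fully consistent with A3---the Jacobian is Hurwitz, giving local exponential stability.

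The main obstacle I anticipate is the radial-unboundedness verification for arbitrary $h$ satisfying only A1--A2: one must rule out $h(ue^s)$ approaching $h(ue^{p^*})$ too fast near the boundaries $s\to\pm\infty$, which the limit argument above handles but which is the delicate step. A secondary subtlety is the degenerate case $g'(\beta^*)=0$, where the linearization has purely imaginary eigenvalues and the exponential-stability claim requires either including $g'(\beta^*)>0$ as an explicit hypothesis or a refinement of LaSalle using the strict Lyapunov behavior off the $\beta=\beta^*$ axis.
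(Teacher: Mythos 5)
Your proof follows essentially the same route as the paper's: the logarithmic change of coordinates $p=\ln I$, the same Hamiltonian-plus-damping Lyapunov function (up to constant factors chosen so the cross terms cancel cleanly), LaSalle's invariance principle, and linearization for local exponential stability. You are in fact more careful than the paper on the two points it glosses over: radial unboundedness of $V$ (needed for LaSalle to yield global convergence), and the degenerate case $g'(\beta^*)=0$, where strict increasingness of $g$ under A3 does not by itself make the characteristic polynomial's coefficients strictly positive as the paper asserts.
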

 \begin{proof} Let us consider the transformed system \eqref{2dmodel_t}. Recall $\beta_e=\gamma/c_s$. By A4, there exists $p_e$ such that $g(\gamma/c_s)=h(ue^{p_e})$. Then, consider the Lyapunov function:
 	\[V(p,\beta)=\int_{p_e}^p \left ( h(u e^{p_e}) - h(u e^{\tilde p}) \right ) \, d\tilde p + \frac 12 (c_s \beta - \gamma)^2. \]
 	We first show that $V$ is positive definite. To that end, note that $V(p,\beta)\ge 0$ follows since the decreasingness of $h$ implies that $h(u e^p_e)\ge h(ue^p)$ for all $p\ge p_e$. Furthermore, $V$ vanishes only at the steady state $(p_e,\beta_e)$. 
 	
 Noting that $h(ue^p)-g(\beta) = h(ue^p)- h(u e^{p_e}) + g(\gamma/c_s) -g(\beta)  $, we write:
\begin{align*}\dot V(I,\beta)&= \left ( h(u e^{p_e}) - h(u e^{\tilde p}) \right )(c_s \beta - \gamma) + (c_s \beta - \gamma) (h(ue^p)- h(u e^{p_e}) + g(\gamma/c_s) -g(\beta)) \\
&= (c_s \beta - \gamma ) (g(\gamma/c_s) -g(\beta)) \le 0,\end{align*}
where the last inequality follows since $g$ is increasing. To infer global stability, we use  LaSalle's invariance principle \cite{khalil} which requires  the set of steady states to be the only invariant set contained in the kernel of $\dot V$. The calculation above   shows that all trajectories in $\mbox{kernel} (\dot V)$ have $\beta=\gamma/c_s$. Hence, consider an invariant set formed of a trajectory with $\beta(t)\equiv \gamma / c_s \in \mbox{kernel} (\dot V)$. This implies that $\dot \beta (t) \equiv 0$. Hence, $g(\beta (t))=g(\gamma/c_s)\equiv h(ue^{p(t)})$. By A1-A4, we have $p(t)\equiv p_e$. Hence, LaSalle's condition is satisfied and global stability follows for \eqref{2dmodel_t}. Hence, the endemic steady state of the system \eqref{2dmodel} is globally asymptotically stable.

To show exponential stability, we write the characteristic equation of the Jacobian of \eqref{2dmodel} at the endemic steady state as:
 $\lambda^2 + \alpha \left . \frac{\partial g}{\partial \beta} \right |_{\beta=\beta_e} \lambda - c_s I_e \alpha u \left .\frac{\partial h}{\partial I}\right |_{I=I_e}=0$ which has strictly positive coefficients (by A1-A4). Hence, the Routh-Hurwitz criteria implies that the eigenvalues have strictly negative real parts. Hence, local exponential stability follows.
 	\end{proof}
 
 \subsection{Validity of the slow-fast decomposition}
 The system \eqref{slowt}-\eqref{fast2} is written in the fast time-scale which allows analysis of the fast dynamics while treating the slow dynamics as constant.
 In order to perform singular-perturbation analysis, we formulate the system \eqref{slowt}-\eqref{fast2} in the standard form. To that end, we write the system in the \emph{slow time scale} $\tau=\varepsilon t$.  Hence, we get:
 \begin{align} \nonumber
 	d{\tilde S}/d\tau&=    (\tilde c \tilde S) \beta I, \\  \label{fastT}
 	\varepsilon 	d  I/d\tau & = (\tilde c  \tilde S) \beta I - \gamma I, \\ \nonumber  
 	\varepsilon 	d\beta/d\tau & = - \alpha( g(\beta) - h(uI)). 
 \end{align}

In order to verify that the slow-fast approximation is valid, we use Tikhonov's Theorem \cite{kokotovic99,khalil}. 
The stability conditions needed  are guaranteed by Theorem \ref{th.gas}. 

In the slow time scale,  the fast subsystem equilibrates to its quasi-steady-state which is a function of the susceptibles. Hence, the variables $I,\beta$ can be approximated by   $ I_e=h^{-1}(g(\gamma/(cS)))/u,  \beta_e =\gamma/(cS)$.  Then, the \emph{slow system} can be written as follows:
\begin{equation}\label{fast} d \bar S/d\tau  = - \frac{\gamma}{u} h^{-1}(g(\gamma/(c\bar S))),
	\end{equation}
whenever A4 is satisfied, and  $d\bar S/d\tau=0$ otherwise.

We state the following result:
\begin{proposition}(Tikhonov's Theorem) Consider the system \eqref{fastT} defined on $[0,T]$ for some $T>0$, and let $S(\tau,\varepsilon), I(\tau,\varepsilon), \beta (\tau,\varepsilon)$ be its solutions for initial conditions $S_0,I_0,\beta_0$. Assume A1-A4 hold. Then, there exists $\varepsilon^*>0$ such that for all $I_0, \beta_0$ and all $0<\varepsilon<\varepsilon^*$, we have: $\tilde S(\tau,\varepsilon)- \bar S(\tau) = O(\varepsilon), I(\tau,\varepsilon)- \bar I(\tau/\varepsilon) = O(\varepsilon), \beta(\tau,\varepsilon)- \bar  \beta(\tau/\varepsilon) = O(\varepsilon) $ hold uniformly for $\tau \in[0,T]$, where $\bar S(.)$ is the solution of \eqref{fast}, and $\bar I(.),\bar \beta(.)$ are the solutions of $\eqref{2dmodel}$ over the original time scale. 
	\end{proposition}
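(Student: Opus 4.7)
The plan is to recognize \eqref{fastT} as being already in the standard singular perturbation form of \cite{khalil,kokotovic99}, with slow variable $\tilde S$ and fast variables $(I,\beta)$, and then invoke Tikhonov's Theorem, whose central hypothesis is delivered by Theorem \ref{th.gas}. First I would identify the slow manifold by solving the algebraic equations $\tilde c\tilde S\beta I-\gamma I=0$ and $g(\beta)-h(uI)=0$ with $\tilde S$ held fixed. Under A4 this yields the isolated root $(\bar I(\tilde S),\bar\beta(\tilde S))=(h^{-1}(g(\gamma/(c\tilde S)))/u,\gamma/(c\tilde S))$, which depends smoothly on $\tilde S$ by A1--A3 and the inverse function theorem. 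Substituting this expression back into the slow equation recovers \eqref{fast}, whose right-hand side is locally Lipschitz in $\bar S$ on any compact subset of the region where A4 holds, so $\bar S(\tau)$ exists and is unique on $[0,T]$.

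Next I would promote the pointwise stability statement of Theorem \ref{th.gas} to the uniform stability that Tikhonov requires. For each fixed $c_s=c\bar S(\tau)$ along the reduced trajectory, Theorem \ref{th.gas} provides global asymptotic stability and local exponential stability of the boundary-layer system \eqref{2dmodel}, and both the Lyapunov function $V$ and the characteristic polynomial appearing in that proof are continuous in $c_s$. Combined with compactness of $\{\bar S(\tau):\tau\in[0,T]\}$ inside the open set where A4 holds, this yields a uniform exponential decay rate for the boundary layer and a common Lyapunov estimate independent of $\tau$. Moreover, because Theorem \ref{th.gas} gives \emph{global} asymptotic stability of the fast subsystem, any initial condition $(I_0,\beta_0)$ lies in the region of attraction, so Tikhonov's boundary-layer initial-condition hypothesis is automatic.

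With the slow manifold smooth and isolated, the reduced problem well-posed on $[0,T]$, the boundary layer uniformly exponentially stable, and the initial data in the basin of attraction, the standard form of Tikhonov's Theorem applies directly and yields an $\varepsilon^*>0$ together with the three $O(\varepsilon)$ estimates, uniformly on $[0,T]$.

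The hard part will be the uniformity step: Theorem \ref{th.gas} is proved at a single fixed value of $c_s$, whereas Tikhonov needs exponential stability of the fast subsystem uniformly in the slow variable as it varies along the reduced trajectory. This reduces to showing that $\bar S(\tau)$ stays in a compact subset of the region in which A4 holds throughout $[0,T]$, so that continuity of the Jacobian eigenvalues in $c_s$ supplies a uniform stability margin. If $\bar S(\tau)$ were to approach the boundary of that region, the QSS would lose existence and the singular-perturbation approximation would break down, so the statement implicitly restricts the horizon $T$ accordingly.
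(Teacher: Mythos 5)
Your proposal is correct and follows essentially the same route as the paper, whose entire proof is a one-line citation of Khalil's Theorem 11.1 together with Theorem \ref{th.gas}. You additionally spell out the hypothesis-checking the paper leaves implicit --- the isolated root, well-posedness of the reduced equation \eqref{fast}, uniformity of the boundary-layer exponential stability in $c_s$ along the compact reduced trajectory, and the basin-of-attraction condition supplied by global asymptotic stability --- which is exactly the content hidden behind that citation.
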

	\begin{proof}
	    The result follow from \cite[Theorem 11.1]{khalil} and Theorem \ref{th.gas}.
	\end{proof}

 \section{The Regulated SIR as an Integral Control System}
 \subsection{Properties of the regulated SIR}
 \subsubsection{The endemic steady-state contact rate is independent of   the societal dynamics} \label{s.beta_independent} If the endemic steady state exists, then the steady state level of the contact rate is $\gamma/c_s$, which is independent of the actual dynamics of $\beta$. In other words, $\beta_e$ is robust to changes in the modeling of the contact rate. For any choice of $g,h,u,\alpha$, if the infection is  endemic, then \emph{the (quasi) steady state level of the contact rate is determined solely by the parameters of the epidemic}.
 \subsubsection{The infection as a Proportional-Integral (PI) controller} The system in \eqref{2dmodel} is usually viewed as the contact rate regulating the dynamics of the infection. Conversely, we can view it as \emph{the infection regulating the dynamics of the contact rate}. Viewing the system from this angle, we note that the infective compartment \eqref{2dmodel} is a PI  controller tracking  the error between the growth rate of infectives $c_s\beta$ and the removal rate $\gamma$ with a state-dependent gain $I$. (In the transformed coordinates \eqref{2dmodel_t}, $p$ is a linear PI controller). In other words, the system is trying to stabilize $\beta$ to a level ensuring that the infection stays at a QSS, i.e., $\dot I=0$. 
 
 \subsubsection{Adaptation to step inputs} As can be noted above, the (quasi)  steady level of the contact rate is independent of $u$ and is regulated by a PI controller. Therefore, if there is a step input applied via $u$ (meaning that the society's perception  to the severity of the epidemic changes to a new level), then the contact rate rejects this change and returns to its original level over the long term.  
 Such regulatory modules are ubiquitous in biological networks \cite{sontag11}.
  \subsection{Fold Change Detection.} In addition to adaptation, an important property featured by many adaptive biological networks is their ability to ignore changes to the absolute level of any input, and  only detect fold changes to the input \cite{shoval10,sontag11}. More precisely,  assume that the system \eqref{2dmodel} is at a steady state \rev{$(I_e,\beta_e)$}  \rev{for a constant input} $u(t)=\bar u$ at time $t=0^-$. Assume that the input changes to $u(t)=q \bar u$ for all $t>0$ and for some $q>0$. Consider the following parameterized output $\beta(t;q,\bar u)$ starting from the initial condition $(I_e,\beta_e)$. Then, the adaptive system \eqref{2dmodel} is said to have a Fold-Change Detection (FCD) property if the output  $\beta(t;q,\bar u_1)=\beta(t;q,\bar u_2)$ for all $t>0$ and any $\bar u_1,\bar u_2>0$. In other words, the output trajectory depends on the fold-change $q$, and not on the absolute level of the input. \rev{We prove next that this applies to our model (see \cite[Lemma 3.1]{sontag11} for a more general result):}
  \begin{theorem}
  Consider \eqref{2dmodel} with Assumptions A1-A4, and let the contact rate $\beta$ be the output. Then, the input-output system has an FCD property.
  \end{theorem}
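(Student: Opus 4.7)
The plan is to exploit the fact that the input $u$ enters the vector field of \eqref{2dmodel} only through the product $uI$. First I would introduce the change of variable $J := uI$. Since $u$ is piecewise constant (equal to $\bar u$ for $t<0$ and $q\bar u$ for $t>0$), on each piece $u$ is a constant and so $\dot J = u \dot I$. A direct computation then gives the autonomous, $u$-free system
\begin{align*}
\dot J &= (c_s\beta - \gamma) J, \\
\dot \beta &= -\alpha\bigl(g(\beta) - h(J)\bigr).
\end{align*}
The crucial observation is that this $(J,\beta)$-system contains no reference to $\bar u$ or $q$ at all; hence its trajectory is uniquely determined by the initial pair $\bigl(J(0^+),\beta(0^+)\bigr)$.

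Next I would compute that initial pair and show it is independent of $\bar u$. By hypothesis the system sits at the endemic steady state for input $\bar u$ just before $t=0$, so Theorem \ref{th.gas} gives $I(0^-) = h^{-1}(g(\gamma/c_s))/\bar u$ and $\beta(0^-)=\gamma/c_s$. Continuity of the state variables across the instantaneous jump in $u$ gives $I(0^+)=I(0^-)$ and $\beta(0^+)=\beta(0^-)$. Therefore
\begin{equation*}
J(0^+) = (q\bar u)\,I(0^+) = q\bar u \cdot \frac{h^{-1}(g(\gamma/c_s))}{\bar u} = q\, h^{-1}(g(\gamma/c_s)),
\qquad \beta(0^+)=\gamma/c_s,
\end{equation*}
both of which depend only on $q$ (and fixed parameters), not on $\bar u$.

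Finally I would assemble the conclusion. Since the $(J,\beta)$ dynamics are $\bar u$-free and the initial condition $(J(0^+),\beta(0^+))$ is $\bar u$-free, the trajectory of $\beta$ after the jump depends only on $q$. That is, for any $\bar u_1,\bar u_2 > 0$, $\beta(t;q,\bar u_1)=\beta(t;q,\bar u_2)$ for all $t>0$, which is precisely the FCD property.

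I do not anticipate a serious obstacle here: the argument is the standard "absorb the scaling of the input into the state" trick for FCD, and the only things to verify carefully are (i) that $u$ appears only through $uI$, which is immediate from the form of \eqref{2dmodel}, and (ii) that the steady-state value of $uI$ is independent of $u$, which is a consequence of the integral-action / adaptation property already noted in \S\ref{s.beta_independent}. The main thing to be precise about is the continuity convention at $t=0$, since the input is discontinuous while the state must be continuous.
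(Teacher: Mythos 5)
Your proof is correct and rests on the same idea as the paper's: the equivariance of \eqref{2dmodel} under the scaling $(I,u)\mapsto(\lambda I,\,u/\lambda)$, combined with the fact that the pre-jump steady state makes $uI$ (and $\beta$) independent of $\bar u$. The paper phrases this by showing the two trajectories $I(t;q,\bar u_1)$ and $I(t;q,\bar u_2)$ stay proportional so that $\beta$ sees the same forcing $h(uI)$, whereas you absorb the input into the state via $J=uI$ to get a manifestly $u$-free system — an equivalent and arguably cleaner bookkeeping of the identical symmetry argument.
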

\begin{proof} \rev{Given $\bar u_1, \bar u_2>0$, $q>0$. Observe that $I(0;q,\bar u_1) =\tfrac{\bar u_1}{\bar u_2} I(0;q,\bar u_2)$, and that \eqref{2dmodel} is invariant with respect to transformation $I \mapsto  \tfrac{\bar u_1}{\bar u_2} I$. Hence, $I(t;q,\bar u_1) =\tfrac{\bar u_1}{\bar u_2} I(t;q,\bar u_2)$ for all $t$.  Write $\dot\beta=F(\beta,uI)$. Then, $\dot \beta(t;q,\bar u_1)= F(\beta,q\bar u_1 I(t;q,\bar u_1))=F(\beta,q\bar u_2 I(t;q,\bar u_2))=\dot \beta(t;q,\bar u_2)$. Since $\beta(0;q,\bar u_1) =\beta(0;q,\bar u_2)$, FCD follows.   %
	}
	\end{proof}

This result implies that the regulated SIR model does not care about the absolute level of the perception of infection $u$, instead it only cares about the fold changes in its level. For instance, a change from a base level of $u=1$ to $10$, will cause the same transient response as a change from $u=10$ to $u=100$. This is consistent with the common understanding of how human perception of fear and danger works.
 
 \subsection{Assigning the quasi-steady-state level of the infectives.}   If ending the infection is not feasible in the short term (which corresponds to having an endemic (quasi) steady state), then the usual aim of an effective contact rate regulation policy (more commonly known as \emph{a social distancing policy}) is to keep the number of infectives plateaued at a low small number. We show in this subsection, that the $I$-coordinate of the endemic steady state can be assigned to any desired level via the constant input $u$.
 
 \begin{proposition}
 	Consider the system \eqref{2dmodel} with  A1-A4. Let $\rev{I^*}>0$ be a desired quasi-steady-state level of the infectives. Then, the required control input is $u= h^{-1}(g(\gamma/c_s))/\rev{I^*}$.
 \end{proposition}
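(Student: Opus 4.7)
The plan is to recognize that this proposition is essentially a direct corollary of Theorem \ref{th.gas}, which already provides the explicit coordinates of the endemic steady state. So the proof amounts to inverting one equation for the unknown $u$ and checking that the resulting value is admissible (i.e., strictly positive and consistent with assumption A4).

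First, I would invoke Theorem \ref{th.gas} to recall that, under A1--A4, the (unique, positive) endemic steady state of \eqref{2dmodel} is
\[
(I_e,\beta_e) \;=\; \bigl(h^{-1}(g(\gamma/c_s))/u,\; \gamma/c_s\bigr).
\]
Crucially, the $\beta$-coordinate is independent of $u$ (as already noted in \S\ref{s.beta_independent}), while the $I$-coordinate is inversely proportional to $u$. Next, I would impose the design condition $I_e = I^*$, which gives the single scalar equation
\[
\frac{h^{-1}(g(\gamma/c_s))}{u} \;=\; I^*,
\]
and solve it for $u$ to obtain $u = h^{-1}(g(\gamma/c_s))/I^*$.

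It then remains to verify that this $u$ is a valid control input, i.e., $u>0$. By A4, $g(\gamma/c_s)\in \image(h)^\circ$, so $h^{-1}(g(\gamma/c_s))$ is well-defined; by A2, $h$ is strictly decreasing and positive, so its inverse maps into $(0,\infty)$, whence the numerator is strictly positive. Combined with $I^*>0$, this yields $u>0$ as required. Finally, since assumption A4 does not depend on $u$, the endemic steady state continues to exist for this choice of $u$, and Theorem \ref{th.gas} guarantees that $(I^*,\gamma/c_s)$ is globally asymptotically stable.

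There is essentially no obstacle here: the heavy lifting was done in Theorem \ref{th.gas}, which pinned down the steady-state map $u \mapsto (I_e,\beta_e)$ in closed form. The only subtle point worth flagging is the monotone separation of roles, namely that $u$ acts purely as a scaling on the $I$-axis of the equilibrium and does not move $\beta_e$, so assignability of $I^*$ is unconstrained on $(0,\infty)$ and the formula is explicit rather than implicit.
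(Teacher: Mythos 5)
Your proposal is correct and follows essentially the same route as the paper: both amount to setting $\dot\beta=0$ at the endemic equilibrium, substituting $\beta_e=\gamma/c_s$, and solving $g(\gamma/c_s)=h(uI^*)$ for $u$, with well-definedness and positivity supplied by A1--A4. Your additional remarks on $u>0$ and the persistence of A4 are sound but not needed beyond what the paper states.
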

\begin{proof} The expression above is well-defined by A1-A4. Setting $\dot \beta=0$ in \eqref{2dmodel} and substituting $ \beta_e=\gamma/c_s$ yields the required expression.
	\end{proof}

 \section{Monod-type Inhibition and Identifiability}
 \subsection{Monod-type Inhibition}
 \begin{figure}
 	\centering \includegraphics[width=0.66\columnwidth]{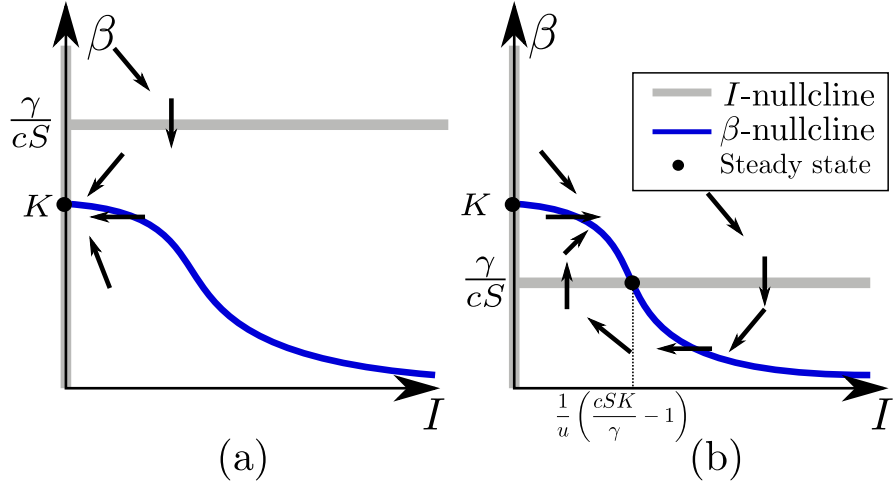}
 	\caption{ \textbf{Phase plane analysis of the fast subsystem with Monod inhibition.} (a) The disease-free steady is asymptotically stable when $\frac{\gamma}{cS}>{K}$.  (b) The endemic steady state is born, and the disease-free steady state exchanges stability with the endemic one. } \label{phaseplane}
 \end{figure}
  In order to examine the proposed model more concretely, we will use  $h: x \mapsto K/(1+x), g:z \mapsto z$, which is a popular functional form in the epidemics literature \cite{capasso78}, and it is known as the Monod or the Michaelis-Menten form. Hence, the contact rate equation becomes:
 \begin{equation}\label{monod} \dot \beta = -\alpha \left (\beta-\frac{K}{1+uI}\right ),\end{equation}
  where $K$ can be interpreted as the nominal contact rate. 
 
 Since the fast system is two-dimensional, we plot the phase plane  in Fig. \ref{phaseplane}. Condition A4 (guaranteeing the existence of endemic steady state) translates into $S >S^*=\gamma/(cK)$. %
 The minimum number of susceptibles needed for a sustained endemic is proportional to the removal rate $\gamma$, and inversely proportional to the intrinsic infection rate $c$ and the nominal contact rate $K$. In Fig. \ref{phaseplane}-a), the disease-free steady state is asymptotically stable. However, when the endemic steady state is born, it becomes the only asymptotically stable steady-state. 
 
{   
 The approximated equation for the susceptibles over the slow time-scale is given by \eqref{fast}. It can be shown easily that if $\tilde S(0)>S^*$, then A4 is satisfied for all $\tau>0$. Hence, the evolution of $\tilde S$ can be describe by linear first order equation:
 $ d{\tilde S}/d\tau = \tfrac{\gamma}{u} - \tfrac{\tilde cK}{ u} \tilde S$.
 The closed-form solution in the original time scale can be written as: $\tilde S(t)= S^* + \left( \tilde S(0) - S^* \right) e^{-\tilde cK \varepsilon t / u}$. By substitution in the quasi-steady-state expression of $I(t)$ we get: $I(t)=\frac 1u \left   ( \tilde S(0)- S^* \right) e^{-\tilde c K \varepsilon t/u}$. The steady state behavior of the model is  $S(t) \to S^*$ and $I(t) \to 0$. Hence, the number of susceptibles approaches the minimal number $S^*$, and the pandemic asymptotically approaches  a disease-free steady state. Furthermore, as predicted by intuition, a higher input $u$ (i.e., stricter social distancing) means that the pandemic gets more prolonged while having a lower quasi-steady-state level of the infectives as given in Proposition 3.

}

  \subsection{Data fitting and identifiability}
In order to apply the proposed model to a practical setting, we need to deal with two issues to ensure the well-posedness of the data fitting problem.

\subsubsection{Measured output} The published data during \rev{pandemics} consist of the daily new cases and daily deaths \rev{(e.g, COVID-19 data \cite{jhu})}. However, none of these data correspond to the state variables in our model. The number of infectives $I$ can be construed as the ``active infections'', however, there are not usually  reliable estimates of active infections, nor recoveries. Instead, notice that the beginning of every active infection is reported within the new daily numbers. Hence, we interpret the new daily numbers as the \emph{inflow} to the \emph{infectives compartment} $I$. %
Therefore, the measured output $y$ can be written as: $y(t)=\tilde c \tilde S(t) \beta(t)I(t)$. As before, we assume that $\tilde S$ is approximately constant in the original time scale, and we can write: $y(t)= c_s   \beta(t)I(t)$  

\subsubsection{Identifiability} The contact rate $\beta(t)$ cannot be measured directly, and is inferred from the data. In fact, in our problem, it can be seen that a re-scaled contact rate $ \hat\beta(t)=\tilde c \beta(t)$ will produce the same measured output, and that the parameters $\tilde c$ and $K$ are not individually identifiable. Therefore, we can normalize the contact rate by fixing $\tilde c=1$. Hence, we consider the following model:
\begin{align} \nonumber \dot I(t)&= (\rev{\tilde S} \hat \beta(t) - \gamma) I(t),  \dot {\hat\beta}(t)=  \alpha\left(  \hat\beta(t)-\frac{K}{1+uI(t)}\right) ,  \\  \label{fit_model} y(t)&=\rev{\tilde S}\hat \beta(t) I(t),
	\end{align}
\rev{where $\tilde S$ is the scaled total population size} which is assumed to be constant, and unknown parameters $\gamma, K, u, \alpha$. 

Identifiability has long been studied as a special case of nonlinear observability  \cite{hermann77,sontag91}. The technique is based on computing the Lie derivatives at time zero and evaluating the rank of the observability matrix symbolically. Several toolboxes are publicly available.  The package STRIKE-GOLDD~\cite{villaverde16} finds that the problem is well-posed and both states are observable and all parameters are locally structurally identifiable. The package SIAN~\cite{sian} reports that initial states as well as all parameters are globally identifiable.
 
 \section{Simulations and Discussion}

 \subsection{The impact of population size}

\begin{figure}
	\centering
	\includegraphics[width=0.67\columnwidth]{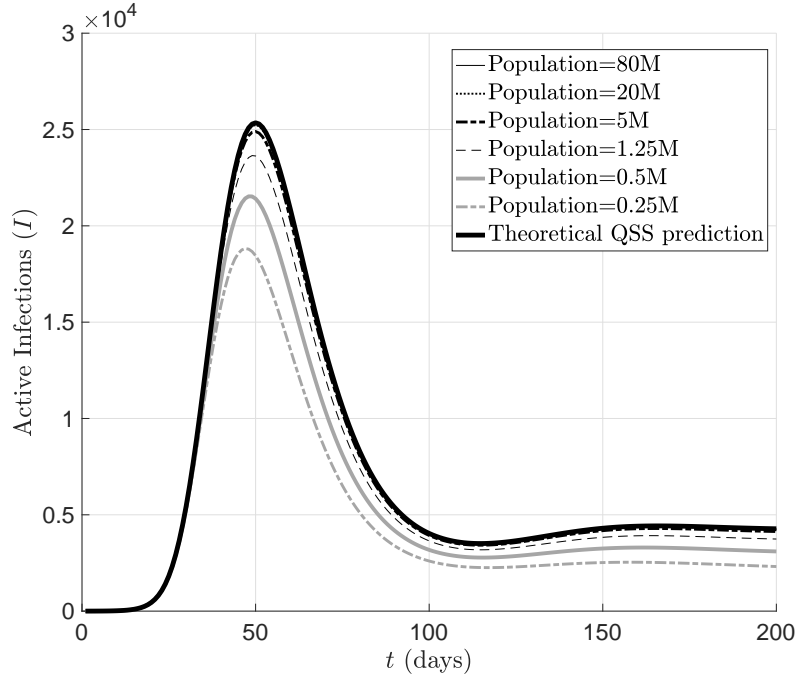} \vspace{-0.15in}
	\caption{\textbf{The effect of population size on the predicted quasi-steady-state behavior.} The total population denotes $S(0)$. For all the curves, the product $cS(0)$ is kept constant at 17.5392. The other parameters in the model \eqref{2dmodel} are: $\gamma=0.091, \alpha=0.0679, K= 0.0229$, $u=0.0008$. The QSS prediction is a solution of the reduced model \eqref{2dmodel}, while the all the other  curves are solutions of the full model \eqref{sir},\eqref{monod}. }\label{f.different_ep}
\end{figure}
We  study the sensitivity of our techniques to population size. Figure \ref{f.different_ep} shows a numerical exploration. Using our study of the reduced model, we predict that $I(t)$ converges to the  QSS level $\frac 1u\left ( \frac{cSK}{\gamma}-1\right )=$ 4,271. We \rev{simulate} the full model \eqref{sir} with different population sizes, which correspond to different values of $\varepsilon$. \rev{We keep $cS(0)$ constant for a meaningful comparison}. We start with the nominal case of a country of medium size with a population of 80M. The simulated parameters give an initial peak of active infection reaching 25,000 cases. The quasi-steady-state level is at 4,265 after 200 days which is 99.86\% close to the theoretical value that was obtained via the QSS approximation. With smaller population sizes, the theoretical value stays pretty close to the actual value for populations higher than 1M. For smaller populations, the actual behavior still resembles the theoretical behavior, however, the quasi-steady-value is lower. This not surprising since a susceptible population of 0.5M will dwindle quickly with  daily active infections in tens of thousands. 
 \subsection{Fitting to published data}
\begin{figure}
	\centering
	
	\includegraphics[width=1\columnwidth]{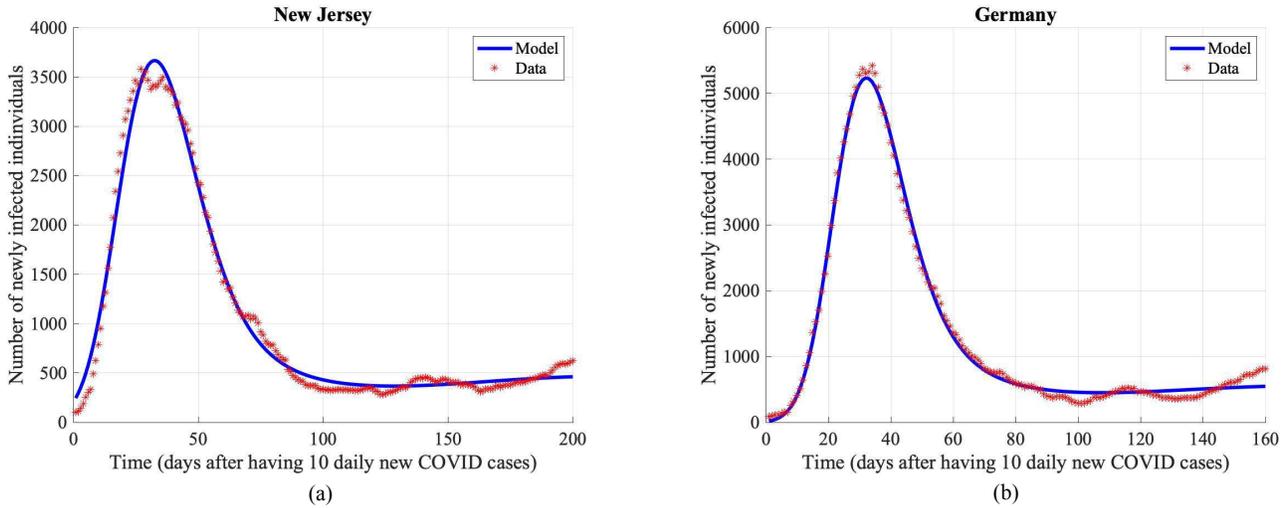}
	\vspace{-0.15in}
	\caption{\textbf{Fitting the first days of the new daily infections (moving average of 7 days).}  (a) \rev{New Jersey}  with parameters in the model \eqref{fit_model}: \rev{$ \gamma=0.071, \alpha=0.0453, K=0.0154$, $u=1.4 \times 10^{-4}$.} (b)  Germany with parameters in the model \eqref{fit_model}: \rev{$ \gamma=0.1073, \alpha=0.0613, K=0.0061$, $u=3.4 \times 10^{-4}$.}}\label{f.fits}
\end{figure}

     In order to show the effectiveness of our model, we fit the \rev{COVID-19} data~\cite{jhu} to the course of disease in  multiple countries that experienced an initial surge followed by a plateau during the summer, which we interpret as a QSS in our model. Fig. \ref{f.block_d} shows the case of the State of New York, while fits for Germany and the state of New Jersey are shown in Fig.~\ref{f.fits}. %
     Note that new daily infections data is noisy and sometime biased to weekend and weekdays. Hence, we use the standard 7-day moving average to filter the noise. %
     The numerical analysis and fitting programs are available on \url{github.com/mahdiarsadeghi/epidemicqss}.

 \subsection{Extensions: Second waves and Vaccinations}
 The proposed model has proved to be a useful tool for studying prolonged epidemics in large populations. This opens the door for studying other effects such as the emergence of second waves and the effectiveness of vaccinations. 
 
 Second waves can emerge because of desensitization to the severity of the pandemic, which can be modeled by a decrease in the control input $u$. An increase in $c$  by more infectious strains of the virus can also lead to second waves.  In addition, increased infections because of poor  ventilation and increased indoor gatherings during the winter   can be modeled by increasing $K$ or decreasing $\alpha$.  Another regulation measure of the pandemic is the use of vaccines. This can be modeled by adding the an outflow term $-\delta S$ to the susceptibles compartment.  Due space limitations, we leave  these investigations for future work.

\end{document}